\documentclass[12pt]{article}
\usepackage{amsthm,amsfonts,amssymb,amsmath}
\usepackage{stmaryrd}
\usepackage[hidelinks]{hyperref}
\usepackage{epsfig}
\usepackage{url}
\usepackage{xcolor,tikz,dsfont}
\usetikzlibrary{positioning, arrows.meta,calc}
\usetikzlibrary{decorations.pathreplacing,decorations.markings}
\usetikzlibrary{matrix}
\usepackage{multicol,graphicx}

\usepackage{fullpage}
\usepackage{bbm}
\usepackage{pdflscape}
\usetikzlibrary{decorations}
\usepackage{svg}
\tikzset{
    vertex/.style={circle, draw, fill=black, inner sep=1.5pt},
    openvertex/.style={circle, draw, fill=white, inner sep=1.5pt},
    lbl/.style={draw=none, fill=none, circle=none, inner sep=1pt},
    arc/.style={-{Stealth[length=2.2mm,width=1.6mm]}, line width=0.8pt},
}

\tikzset{
    proofbox/.style={rounded corners=7pt, draw=black!45, fill=black!3, line width=0.5pt},
    addarc/.style={arc, red!70!black, line width=1.0pt},
    marked/.style={openvertex, line width=1.1pt},
    faint/.style={draw=black!40},
}

\usepackage[shortlabels]{enumitem}

\numberwithin{equation}{section}

\makeatletter
\let\c@figure\c@equation
\let\c@table\c@equation

\makeatother

\newtheorem{thm}{Theorem}[section]

\newtheorem{lem}[thm]{Lemma}

\newtheorem{conj}[thm]{Conjecture}

\theoremstyle{definition}

\theoremstyle{remark}

\newtheorem{rem}[thm]{Remark}

\DeclareTextCompositeCommand{\v}{OT1}{l}{l\nobreak\hspace{-.1em}'}
\DeclareTextCompositeCommand{\v}{OT1}{t}{t\nobreak\hspace{-.1em}'\nobreak\hspace{-.15em}}

\DeclareMathOperator{\Hom}{Hom}

\DeclareMathOperator{\rng}{rng}

\newcommand{\ent}{\mathrm{H}}
\newcommand{\Prob}{\mathbb{P}}

\setlist{leftmargin=3\parindent,labelindent=3\parindent}
\setlist[enumerate]{%
  leftmargin=3\parindent,%
  align=left,%
  labelwidth=3\parindent,%
  labelsep=0pt%
}
\setlist[enumerate,1]{%
  label={\normalfont (\arabic*)}, ref={\arabic*}%
}

\title{An entropy proof of the commonality of apices of trees}
\author{Hao Chen\thanks{School of Mathematical Sciences, Soochow University,
Suzhou 215006, China. E-mail:
\href{mailto:chenhao@suda.edu.cn}{\texttt{chenhao@suda.edu.cn}}}}
\date{}

\begin{document}

\maketitle

\begin{abstract}
A graph \(H\) is called \emph{common} if the number of monochromatic copies of \(H\) in a \(2\)-edge-colouring of a large complete graph is asymptotically minimized by the random colouring with an equal proportion of each colour.
Grzesik, Lee, Lidick\'y and Volec conjectured that every $k$-apex of any connected Sidorenko graph is common. Recently, Kr{\'a}l', Krnc and Lamaison proved that the $k$-apex of any tree is common for $k\leq 5$. In this paper, we extend their result by showing that for every positive integer $k$, the $k$-apex of any tree or any forest with average degree at least one is common. Our proof involves an application of the entropy method. 
\end{abstract}

\section{Introduction}\label{sec:introduction}

Ramsey’s theorem states that for any graph $H$, every $2$-edge-colouring of the complete graph $K_N$ contains a monochromatic copy of $H$, provided that $N$ is
sufficiently large. A natural quantitative question from Ramsey's theorem, which is called the \emph{Ramsey multiplicity problem}, is the following: What is the minimum number of monochromatic copies of $H$ contained in a $2$-edge-colouring of $K_N$?
For a fixed non-empty graph $H$, let $M(H,N)$ be the minimum number of labelled
monochromatic copies of $H$ over all $2$-edge-colourings of $K_N$.
The \emph{Ramsey
multiplicity constant} of $H$ is
\[
 C(H)=\lim_{N\to\infty}\frac{M(H,N)}{N^{v(H)}},
\]
where $v(H)$ denotes the number of vertices of
$H$. A uniformly random $2$-edge-colouring gives
$C(H)\le2^{1-e(H)}$, where $e(H)$ is the number of edges in $H$. The graph $H$ is called \emph{common} if equality
holds.

Determining which graphs are common is a difficult problem in extremal graph theory. Goodman~\cite{G59} proved that $K_3$ is common.
Erd\H{o}s~\cite{E62} conjectured that every complete graph is common,
and Burr and Rosta~\cite{BR80} extended this conjecture to all graphs.
Both conjectures were disproved: Sidorenko~\cite{S89} showed that a
triangle with a pendant edge is not common, and Thomason~\cite{T89}
showed that $K_4$ is not common. More generally, Jagger,
{\v S}{\v t}ov{\'\i}{\v c}ek and Thomason~\cite{JST96} proved that
every graph containing $K_4$ is not common.

On the positive side, for bipartite graphs, commonality is closely related to Sidorenko's conjecture~\cite{S93}, since every graph with the Sidorenko property is common. A bipartite graph $H$ has the Sidorenko property if the number of copies of $H$ in any graph is asymptotically at least
the number of its copies in the random graph with the same density. Sidorenko's conjecture states that every bipartite graph has the Sidorenko property. However, an OpenAI preprint~\cite{OAI26} recently claimed a counterexample to Sidorenko’s conjecture. For non-bipartite graphs, several families are known to be common. Hatami et al.~\cite{HHNRK12} proved that the $5$-wheel is common using the flag algebra method introduced by Razborov~\cite{R07}. Grzesik, Lee, Lidick{\'y} and Volec~\cite{GLLV} established
the commonality of several classes of tripartite graphs, including
triangle-trees. 
Kr{\'a}l', Volec and Wei~\cite{KVW} subsequently constructed connected common graphs with arbitrarily large chromatic number. 
For other interesting results on this topic; see, e.g.,~\cite{BMN22,BMN2024,Chen26,KKL26,KL22,KoL22,KNNVW22,LN23,R16}.

For a graph $H$ and a positive integer $k$, let $H^{+k}$ be the graph obtained by adding $k$ independent vertices, each adjacent to every vertex of $H$. We call $H^{+k}$ the \emph{$k$-apex} of $H$. Sidorenko~\cite{S96} showed that $H^{+1}$ is common whenever $H$ is a connected bipartite graph with the Sidorenko property.
Grzesik, Lee, Lidick{\'y} and Volec~\cite{GLLV} proposed the following extension.

\begin{conj}[Grzesik, Lee, Lidick{\'y} and Volec]
\label{conj:apices}
If $H$ is a connected bipartite graph with the Sidorenko property,
then $H^{+k}$ is common for every positive integer $k$.
\end{conj}

They proved this conjecture for connected bipartite graphs on at most
five vertices \cite[Theorem~1.4]{GLLV}. Recently, Kr{\'a}l', Krnc and
Lamaison~\cite{KKL26} proved that the $k$-apex of every tree is common
for $k\le5$. Our first result removes the restriction on $k$.

\begin{thm}\label{main}\label{thm:common}
For every tree $T$ and every positive integer $k$, the graph $T^{+k}$
is common.
\end{thm}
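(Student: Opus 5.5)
We pass to graphons and aim to show the following. For every symmetric measurable $W:[0,1]^2\to[0,1]$,
\[
t(T^{+k},W)+t(T^{+k},1-W)\ \ge\ 2^{1-e(T^{+k})}.
\]
Here $T$ has $m$ vertices and $m-1$ edges, so $e(T^{+k})=(m-1)+km$.

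Write $U=1-W$. The apex vertices $a_1,\dots,a_k$ are independent, and for a fixed embedding $x=(x_v)_{v\in V(T)}$ of the tree they contribute $\prod_v W(a,x_v)$ each. Hence
\[
t(T^{+k},W)=\int \prod_{uv\in E(T)}W(x_u,x_v)\Big(\int\prod_{v}W(a,x_v)\,da\Big)^{k}dx .
\]
The plan has two stages. First, use convexity (Jensen/Hölder in the $k$-th power) to reduce to a statement about the ``local'' quantities $d_W(x)=\int W(x,y)\,dy$. Second, handle the tree part with an entropy argument that yields a Sidorenko-type lower bound weighted by the apex contributions.

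\textbf{Entropy step.} Write each apex factor as $\int\prod_v W(a,x_v)\,da = t_x(K_{1,m},W)$, with the leaves rooted at the $x_v$. The whole count $t(T^{+k},W)$ can then be written as the homomorphism density of a single graph. The idea is to construct a probability distribution on homomorphisms of $T^{+k}$ as follows. Sample an apex vertex $a$. Conditionally on $a$, sample the tree vertices via a Markov chain along the tree: pick a root, then at each tree edge $uv$ draw $x_v$ from a conditional law proportional to $W(x_u,x_v)W(a,x_v)$. Finally add the remaining $k-1$ apices as conditionally independent copies given $x$.

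Following the standard entropy proof that trees are Sidorenko (Kopparty–Rossman, Szegedy style), the entropy of this distribution should decompose into terms of the form
\[
\ent(\text{edge } (a,x_v))\quad\text{and}\quad \ent(\text{edge } (x_u,x_v)\mid a),
\]
up to corrections by the entropies of the vertex marginals. Then $\log t(T^{+k},W)\ge \ent(\mu)+\mathbb{E}_\mu\log(\text{weights})$. The goal is the bound
\[
t(T^{+k},W)\ \ge\ \int\Big(\int W(a,y)\,dy\Big)^{?}\cdots,
\]
namely a lower bound by a functional depending only on the apex degrees $d(a)$ and the triangle-type densities $\int W(a,x)W(x,y)W(a,y)$. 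The natural target is the inequality
\[
t(T^{+k},W)\ \ge\ \int_a d(a)^{\,m}\,\big(\tau(a)/d(a)^2\big)^{m-1}\,da\ \cdot\ (\ldots)^{k-1},
\]
where $\tau(a)=\int W(a,x)W(a,y)W(x,y)\,dx\,dy$. This is the bound one gets by treating the neighbourhood of $a$ as a weighted graph with vertex weights $W(a,\cdot)$ and applying Sidorenko for trees to it. The extra $k-1$ apices would be handled by Hölder together with a second entropy layer conditioning on $x$.

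\textbf{Combining the colours.} Next, apply the same bound to $U$. It remains to show that the resulting sum of the two one-variable functionals is at least $2^{1-e(T^{+k})}$. For this we use the pointwise fact $d_W(a)+d_U(a)=1$ and the relation between $\tau_W(a)+\tau_U(a)$ and the degrees, which is Goodman-type: locally one has $\tau_W+\tau_U\ge$ (expression in the degrees) $\ge 1/4$ up to controlled terms. We then finish with convexity of $t\mapsto t^{p}$ and the power-mean inequality.

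\textbf{Main obstacle.} I expect the main obstacle to be that the $k$ apices interact. The product over apices does not factor, so a naive Hölder step loses too much for large $k$. This is presumably why Kr\'al'--Krnc--Lamaison stopped at $k\le5$. My first attempt at this step is to put all $k$ apices into a single entropy distribution, sampling $x$ first via the tree Markov chain with edge weights $W(x_u,x_v)\prod_{j}W(a_j,x_v)$. The hope is that the resulting chain-rule terms are exactly entropies of $K_{1,k}$-type or $K_{2,k}$-type stars. Those stars are Sidorenko, so each term could be lower bounded and the two colours compared term by term via convexity.
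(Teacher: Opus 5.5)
There is a genuine gap, and it sits at $k\ge 2$, which is where all the difficulty lies.

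\textbf{What works.} For $k=1$ your target is the weighted Sidorenko tree bound
$t(T^{+1},W)\ge\int\tau(a)^{m-1}d(a)^{-(m-2)}\,da$. Pointwise H\"older over the two colours (using $d_W+d_{1-W}=1$), together with the \emph{integrated} Goodman bound $m(K_3,W)\ge 1/4$, does finish that case.

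\textbf{What fails.} For general $k$ you never specify the factor $(\ldots)^{k-1}$ or the ``second entropy layer'', and the concrete mechanism you propose cannot work as stated. Run the tree Markov chain inside the weighted common neighbourhood of the apex tuple. The chain rule then gives entropy $(m-1)\ent(\text{book }K_{1,1,k})-(m-2)\ent(\text{star }K_{1,k})$. The star term has a \emph{negative} coefficient, so you need an \emph{upper} bound on it. The Sidorenko property of stars is a lower bound on star counts, so it points the wrong way. Bounding the star entropy by its support size $\log_2\hom(K_{1,k},G)$ at best reproduces the Kr{\'a}l'--Krnc--Lamaison estimate $m(T^{+k},W)\ge m(B_k,W)^{m-1}/m(S_k,W)^{m-2}$. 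That estimate then needs $m(B_k,W)/m(S_k,W)\ge 2^{-(k+1)}$, which fails for $k=12$. So your ``first attempt'' is precisely the route known to stall.

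Your local Goodman step is also unavailable. For $W$ the indicator of $[p,1]^2$, every $a<p$ has $\tau_W(a)+\tau_{1-W}(a)=1-(1-p)^2$, which is small for small $p$. Only the integral $m(K_3,W)$ is bounded below.

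\textbf{The missing idea.} Your architecture matches the paper's: a shared apex tuple, the tree glued edge by edge, and the colours combined by H\"older and Goodman. What is missing is the choice of local book law, which must be non-uniform so that its star marginal is controlled by an edge marginal.
\begin{itemize}
\item Take $(X,Y,Z)$ uniform on $\Hom(K_3,G)$, and let $A=(Z_1,\dots,Z_k)$ be conditionally independent copies of $Z$ given $(X,Y)$.
\item Then $\ent(X,Y,A)=k\ent(X,Y,Z)-(k-1)\ent(X,Y)$ exactly.
\item Since each $(X,Z_i)$ has the law of $(X,Y)$, subadditivity gives $\ent(X,A)\le k\ent(X,Y)-(k-1)\ent(X)$.
\item Gluing this book along $T$ with $A$ shared gives entropy $(m-1)\ent(X,Y,A)-(m-2)\ent(X,A)$.
\item The leftover $\ent(X)$ terms are absorbed via $2\ent(X,Y)-\ent(X)\le\log_2\hom(P_2,G)$.
\end{itemize}
This yields $t(T^{+k},W)\,t(P_2,W)^{(k-1)(m-2)}\,t(K_2,W)^{k+m-3}\ge t(K_3,W)^{k(m-1)}$. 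Equivalently, it is the H\"older-averaged $k=1$ bound $t(K_3,W)^{m-1}/t(K_2,W)^{m-2}$ times $\bigl(t(K_3,W)^{m-1}/(t(P_2,W)^{m-2}t(K_2,W))\bigr)^{k-1}$.

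H\"older across the two colours and Goodman's $m(K_3,W)\ge m(P_2,W)^2$, $m(P_2,W)\ge 1/2$ then give $m(T^{+k},W)\ge m(P_2,W)^{e(T^{+k})-1}\ge 2^{1-e(T^{+k})}$. No book-to-star ratio ever appears. The one-vertex tree, where $T^{+k}=K_{1,k}$, is handled separately by H\"older on degrees.
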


Our argument also applies to forests, including forests with isolated
vertices, provided that their average degree is at least one.
For a forest on $n$ vertices with $q$ components, this condition is
equivalent to $n\ge2q$.

\begin{thm}\label{thm:forest}
Let $F$ be a forest on $n$ vertices with $q\ge1$ components. If
$n\ge2q$, then $F^{+k}$ is common for every positive integer $k$.
\end{thm}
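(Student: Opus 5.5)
We work in the graphon language. Let $W$ be a graphon (the red density) and $U=1-W$ (the blue density). Commonality of $H=F^{+k}$ means
\[
t(H,W)+t(H,U)\ge 2^{1-e(H)}, \qquad e(H)=e(F)+kn .
\]
The plan is to condition on the $k$ apex vertices. Fix apices $y=(y_1,\dots,y_k)$ and set
\[
w_y(x)=\prod_{j} W(x,y_j), \qquad u_y(x)=\prod_j U(x,y_j).
\]
Then $t(H,W)=\mathbb{E}_y\, t_{w_y}(F,W)$, where $t_{w}(F,W)$ is the vertex-weighted homomorphism density of $F$: each vertex $x$ carries weight $w(x)$ and each edge carries weight $W$.

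\textbf{Step 1: a weighted Sidorenko inequality for trees.} I would prove, via the entropy method, that for a tree $T$ on $n$ vertices
\[
t_w(T,W)\ \ge\ \frac{\bigl(\int\!\!\int w(x)W(x,x')w(x')\bigr)^{n-1}}{\bigl(\int w\bigr)^{n-2}} .
\]
The entropy argument goes as follows. Sample an edge $(X,X')$ with probability proportional to $w(x)W(x,x')w(x')$. Then propagate along the tree as a Markov chain: from each vertex, the next neighbour is chosen according to the conditional distribution of $X'$ given $X$. This is consistent because the edge measure is symmetric and each vertex marginal is proportional to $w(x)d_w(x)$. The resulting entropy yields the bound with the degrees handled by Jensen/Gibbs. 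Iterating the conditional distribution of $X'$ given $X$ along the tree is the standard Kopparty--Rossman style computation.

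For a forest with $q$ components, multiply the bounds over the components. The result is $e_w^{\,n-q}/(\int w)^{n-2q}$, where $e_w$ is the double integral above. Since $n\ge 2q$, the exponent of $\int w\le 1$ in the denominator is nonnegative, so we may drop it. This gives $t_w(F,W)\ge e_w(y)^{n-q}$. This is exactly where the hypothesis $n\ge 2q$ is used.

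\textbf{Step 2: averaging over the apices.} Writing $m=n-q=e(F)$, we now need
\[
\mathbb{E}_y\, e_{W,y}^{m}+\mathbb{E}_y\, e_{U,y}^{m}\ \ge\ 2^{1-m-kn},
\]
where $e_{W,y}=\int\!\!\int w_y(x)W(x,x')w_y(x')$. By Jensen (convexity of $s\mapsto s^m$) it suffices to bound $A=\mathbb{E}_y e_{W,y}$ and $B=\mathbb{E}_y e_{U,y}$, since then the left-hand side is at least $A^m+B^m\ge 2^{1-m}\bigl(\tfrac{A+B}{2}\bigr)^m\cdot 2^{m}$. Here $A=t(K_2^{+k},W)$ and $B=t(K_2^{+k},U)$. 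So it would suffice to know $A+B\ge 2^{1-(2k+1)}$, i.e.\ that $K_2^{+k}=K_{1,1,k}$ is common, together with the right exponents.

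But the exponents do not match: we need the total power $2^{-(m+kn)}$, while this route gives $2^{-m(2k+1)}$. So the plain Jensen step is too lossy.

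\textbf{Step 3: the main obstacle, and the fix.} The correct normalisation must retain the vertex weights. I would keep the denominator $(\int w_y)^{n-2q}$ and instead use a Hölder-type combination that preserves the degrees of the apices. The target is a two-colour inequality
\[
\mathbb{E}_y \frac{e_{W,y}^{m}}{(\int w_y)^{n-2q}}+(\text{same for } U)\ \ge\ 2^{1-m-kn}.
\]
I would attempt to prove this by writing $e_{W,y}=\rho_y(\int w_y)^2$ with a normalised edge density $\rho_y$. The expression then becomes $\mathbb{E}_y\,\rho_y^{m}(\int w_y)^{n}$. Applying Hölder/Jensen jointly in $y$ reduces the claim to a statement about $K_{1,1,k}$ and $K_{1,k}$-type densities in $W$ and $U$.

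That final two-colour inequality is where I expect the real difficulty. I would try to prove it by a second entropy argument: build a single random configuration with the apices included, and compare the red and blue entropies using the pointwise fact $W+U=1$, via the convexity bound $a^{p}+b^{p}\ge 2^{1-p}$ for $a+b=1$.
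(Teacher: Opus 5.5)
\textbf{There is a genuine gap: the decisive two-colour inequality is never proved, and your suggested route to it runs into a ratio bound known to fail for large $k$.}

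Steps 1 and 2 are sound. Step 1 is Sidorenko's inequality for trees under the probability measure proportional to $w_y$. But Steps 1 and 3 together reconstruct the Kr\'al'--Krnc--Lamaison argument, and the proposal stops exactly where that argument breaks down.

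The reduction you suggest, Jensen/H\"older in $y$, turns your target into $\sum_i t(K_{1,1,k},W_i)^{n-q}/t(K_{1,k},W_i)^{n-2q}\ge 2^{1-e(F^{+k})}$, with $W_0=W$ and $W_1=1-W$. For trees, H\"older across colours turns the left side into $m(K_{1,1,k},W)^{n-1}/m(K_{1,k},W)^{n-2}$. This can stay above $2^{1-e(T^{+k})}$ for all $n$ only if $m(K_{1,1,k},W)/m(K_{1,k},W)\ge2^{-(k+1)}$. That ratio bound is false for $k=12$ (Kr\'al', Krnc and Lamaison, Section~4).

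Keeping the colours separate does not escape the problem. Letting $n\to\infty$ over trees, your reduced target forces $\max_i t(K_{1,1,k},W_i)/t(K_{1,k},W_i)\ge2^{-(k+1)}$ for every $W$. This is a book--star ratio inequality of the same kind, and you give no argument for it; the closing remark about $a^p+b^p\ge2^{1-p}$ does not supply one.

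The loss originates in Step 1. The factor $(\int w_y)^{n-2q}$ corresponds, in entropy terms, to bounding $\ent(X\mid A)$ by the logarithm of the size of the apices' common neighbourhood. That uses only the support of the star marginal and discards its actual distribution.

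\textbf{The missing idea} is to change the local book distribution instead of conditioning on uniformly chosen apices. In a finite graph $G$, take $(X,Y,Z)$ uniform on $\Hom(K_3,G)$. Given $(X,Y)$, let $Z_1,\dots,Z_k$ be conditionally independent copies of $Z$, and put $A=(Z_1,\dots,Z_k)$. Then $\ent(X,Y,A)=k\ent(X,Y,Z)-(k-1)\ent(X,Y)$. Since each $(X,Z_i)$ has the law of $(X,Y)$, also $\ent(X,A)\le k\ent(X,Y)-(k-1)\ent(X)$. So the star entropy is controlled by the same triangle-projected edge distribution as the book.

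Gluing along $F$ with $A$ shared gives entropy $(n-q)\ent(X,Y,A)-(n-2q)\ent(X,A)-(q-1)\ent(A)$. Here $n\ge2q$ is exactly what makes the upper bound on $\ent(X,A)$ useful. With $2\ent(X,Y)-\ent(X)\le\log_2\hom(P_2,G)$ and $\ent(A)\le k\log_2 N$, this yields $t(F^{+k},W)\,t(P_2,W)^{(k-1)(n-2q)}\,t(K_2,W)^{n+q(k-3)}\ge t(K_3,W)^{k(n-q)}$.

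Only then are the colours combined. H\"older with $m(K_2,W)=1$, followed by Goodman's formula ($m(K_3,W)\ge m(P_2,W)^2$ and $m(P_2,W)\ge\frac12$), gives $m(F^{+k},W)\ge2^{1-q}m(P_2,W)^{e(F^{+k})-q}\ge2^{1-e(F^{+k})}$. Books and stars never appear as separate densities, so no ratio bound between them is needed.
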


The proofs involve applications of the entropy method, which is a powerful tool in extremal combinatorics. In Section~\ref{sec:preliminaries}, we recall the entropy method as well as the theory of graph limits and Goodman's formula. In Section~\ref{sec:proofs}, we prove key counting inequalities by the entropy method and finally deduce our main results.

\section{Preliminaries}\label{sec:preliminaries}

We use the language of graph limits to formulate our results; see
\cite{LL12} for a comprehensive treatment of the theory of graph limits. All graphs are finite, simple,
and undirected. We write $v(H)$ and $e(H)$ for the numbers of vertices
and edges of a graph $H$, respectively, and $P_2$ for the path of length two. The triangular book $K_{1,1,k}$ consists of $k$ triangles
sharing an edge, called its spine.

For a graph $G$ with at least one vertex, a \emph{homomorphism} from
a graph $H$ to $G$ is a map
$\phi\colon V(H)\to V(G)$ that preserves adjacency. We denote the set
of such maps by $\Hom(H,G)$ and write $\hom(H,G)=|\Hom(H,G)|.$
Define the \emph{homomorphism density} of $H$ in $G$ as $$t(H,G)=\frac{\hom(H,G)}{v(G)^{v(H)}}.$$
Thus $t(H,G)$ is the probability that a uniformly random map from
$V(H)$ to $V(G)$ is a homomorphism. 
A sequence of graphs $(G_N)$ with
$v(G_N)\to\infty$ is \emph{convergent} if $t(H,G_N)$ converges for
every graph $H$. A \emph{graphon} is a measurable function $W\colon [0,1]^2 \to [0,1]$ such that $W(x,y)=W(y,x)$ for almost every $(x,y)\in [0,1]^2$. The homomorphism density of $H$ in $W$ is
\[
 t(H,W)=\int_{[0,1]^{V(H)}}
       \prod_{uv\in E(H)}W(x_u,x_v)\prod_{v\in V(H)}dx_v.
\]
For a finite graph $G$, let $W_G$ be the graphon obtained by partitioning
$[0,1]$ into $v(G)$ intervals of equal measure and setting $W_G$ equal
to the corresponding entry of the adjacency matrix of $G$ on each
rectangle. Then $t(H,W_G)=t(H,G)$ for every graph $H$. Every convergent graph sequence has a graphon limit: there is a graphon $W$ such that $t(H,G_N)$ tends to $t(H,W)$ for every $H$.

Conversely, every graphon $W$ can be obtained as the limit of a
sequence of finite graphs in the sense of homomorphism densities.
Such a sequence can be constructed using \emph{$W$-random graphs}:
choose $x_1,x_2,\ldots$ independently and uniformly from $[0,1]$
and, conditional on these points, join each pair $i<j$ independently
with probability $W(x_i,x_j)$. Let $G_N$ be the graph induced by
$\{1,\ldots,N\}$. It is standard that, with probability one,
for every finite graph $H$,
\begin{equation}\label{eq:w-random-convergence}
 t(H,G_N)\longrightarrow t(H,W),
 \qquad
 t(H,\overline{G_N})\longrightarrow t(H,1-W);
\end{equation}
see \cite[Section~2]{GLLV} and \cite[Proposition~11.32]{LL12}.
Consequently, any non-strict polynomial inequality involving
finitely many homomorphism densities in a graph and its complement
can be transferred from finite graphs to graphons by passing to
the limit along such a sequence. We use this observation to derive
graphon inequalities from discrete entropy arguments, as in
\cite[proof of Lemma~3.6]{GLLV}.

\subsection{Common graphs and Goodman's formula}
\label{subsec:common-graphs}

A red/blue edge-colouring of a complete graph is determined by its red
graph $G$, with $\overline G$ as its blue graph. In the graphon setting,
the two colours are represented by $W$ and $1-W$. Accordingly, define
the \emph{monochromatic homomorphism density} of $H$ by
\[
 m(H,W)=t(H,W)+t(H,1-W).
\]
For a colouring of $K_N$ with red graph $G$, the number of labelled
monochromatic copies of $H$ is
\[
 N^{v(H)}m(H,W_G)+O_H(N^{v(H)-1}),
\]
since at most $O_H(N^{v(H)-1})$ maps in either colour fail to be
injective. The error is uniform over all colourings. Together with \eqref{eq:w-random-convergence}, this gives the graphon formulation of the
Ramsey multiplicity constant:
\begin{equation}\label{eq:ramsey-graphon}
 C(H)=\min_W m(H,W),
\end{equation}
where the minimum is over all graphons. 
Indeed, compactness of the graphon space in the cut distance and continuity of $m(H,\cdot)$ ensure that the minimum is attained; see \cite{LL12}. Equality with
$C(H)$ follows by considering the $W$-random graphs described above.

The constant graphon $W\equiv1/2$ gives $m(H,W)=2^{1-e(H)}$.
Consequently, $H$ is common if and only if
\[
 m(H,W)\ge2^{1-e(H)}
\]
for every graphon $W$. In the language of graph limits, we say that a bipartite graph $H$ has the
\emph{Sidorenko property} if
$t(H,W)\ge t(K_2,W)^{e(H)}$ for every graphon $W$.

The following form of Goodman's formula~\cite{G59} relates the monochromatic
triangle density to the density of $P_2$.

\begin{lem}[Goodman's formula]
\label{lem:goodman}
For every graphon $W$, let $s=m(P_2,W)$. Then
\begin{equation}\label{eq:goodman}
 \frac12\le s\le1,
 \qquad m(K_3,W)=\frac{3s-1}{2}\ge s^2.
\end{equation}
In particular, $m(K_3,W)\ge1/4$.
\end{lem}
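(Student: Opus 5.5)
We prove Goodman's formula by writing the monochromatic densities of $P_2$ and $K_3$ as integrals of the degree function and of the triangle density of $W$. Set $d(x)=\int_0^1 W(x,y)\,dy$, $p=t(K_2,W)=\int_0^1 d(x)\,dx$, $t_2=t(P_2,W)=\int_0^1 d(x)^2\,dx$, and $t_3=t(K_3,W)$. The complementary graphon $1-W$ has degree function $1-d(x)$.

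First we compute $s$. We have
\[
 t(P_2,1-W)=\int_0^1(1-d(x))^2\,dx=1-2p+t_2,
\]
so $s=1-2p+2t_2$. The upper bound follows from $d(x)^2\le d(x)$, which gives $t_2\le p$ and hence $s\le 1$. For the lower bound, pointwise $d^2+(1-d)^2\ge\frac12$, and integrating gives $s\ge\frac12$.

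Next we expand $t(K_3,1-W)$. We expand the product $\prod_{e}(1-W(e))$ over the three edges of the triangle and integrate term by term. The single-edge terms contribute $3p$ in total. Each pair of edges shares a vertex, so each pair term equals $t_2$, and the three pairs contribute $3t_2$. The full product contributes $t_3$. Hence
\[
 t(K_3,1-W)=1-3p+3t_2-t_3,
\]
so the triangle terms cancel and
\[
 m(K_3,W)=1-3p+3t_2=1+\tfrac32(s-1)=\tfrac{3s-1}{2}.
\]

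Finally we compare with $s^2$. The inequality $\frac{3s-1}{2}\ge s^2$ is equivalent to $2s^2-3s+1\le 0$, that is, $(2s-1)(s-1)\le0$. This holds because $\frac12\le s\le1$. Since $s\ge\frac12$, we get $m(K_3,W)\ge s^2\ge\frac14$.

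There is no real obstacle in this argument. The only point needing care is the observation that, in the expansion of $t(K_3,1-W)$, every pair of triangle edges forms a copy of $P_2$. This is what makes $t_3$ cancel and leaves a formula depending only on $s$.
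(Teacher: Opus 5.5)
Your proposal is correct and follows essentially the same route as the paper: you express $s$ through the degree function $d$, expand $t(K_3,1-W)$ so that the triangle terms cancel and leave $m(K_3,W)=1-3t(K_2,W)+3t(P_2,W)=(3s-1)/2$, and then use the factorization $(2s-1)(1-s)\ge 0$. The only difference is cosmetic: you obtain $s\le 1$ from $t(P_2,W)\le t(K_2,W)$, while the paper reads both bounds directly from $s=\int_0^1\bigl(d(x)^2+(1-d(x))^2\bigr)\,dx$.
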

\begin{proof}
Let $d_W(x)=\int_0^1W(x,y)\,dy$. Then
\[
 s=\int_0^1\bigl(d_W(x)^2+(1-d_W(x))^2\bigr)\,dx,
\]
which gives $1/2\le s\le1$. Expanding $t(K_3,1-W)$ yields
\[
 m(K_3,W)=1-3t(K_2,W)+3t(P_2,W)=\frac{3s-1}{2}.
\]
Finally, $(3s-1)/2-s^2=(2s-1)(1-s)/2\ge0$.
\end{proof}

\subsection{The entropy method}\label{subsec:entropy}

We use the entropy method to prove homomorphism counting inequalities. For applications of this method in inequalities of homomorphism densities, see, e.g.,~\cite{BCNS25,BMN2024,CCN,CCN26,CKLL18,GLLV,Lee21,Sz15}.
We recall the definitions and standard properties of this method, which can also be found in~\cite[Section 15]{AS16}. Readers familiar with the basic properties of entropy may skip directly to Lemmas~\ref{lem:distribution-gluing} and~\ref{lem:forest-sampling}. All random variables used in the entropy arguments are discrete and have finite range.

For a discrete random variable $X$, its \emph{range} is
\[
 \rng(X)=\{x:\Prob(X=x)>0\}.
\]
The \emph{entropy} of $X$ is
\[
 \ent(X)=-\sum_{x\in\rng(X)}\Prob(X=x)\log_2\Prob(X=x).
\]
For discrete random variables $X,Y$ and $y\in\rng(Y)$, define the
\emph{range of $X$ given that $Y=y$} by
\[
 \rng(X\mid Y=y)=\{x:\Prob(X=x\mid Y=y)>0\}.
\]
The \emph{conditional entropy of $X$ given that $Y=y$} is
\[
 \ent(X\mid Y=y)
 =-\sum_{x\in\rng(X\mid Y=y)}
     \Prob(X=x\mid Y=y)\log_2\Prob(X=x\mid Y=y).
\]
The \emph{conditional entropy of $X$ given $Y$} is
\[
 \ent(X\mid Y)=\sum_{y\in\rng(Y)}\Prob(Y=y)\ent(X\mid Y=y).
\]

We say that discrete random variables $X$ and $Z$ are
\emph{conditionally independent given $Y$} if
\[
 \begin{aligned}
 \Prob(X=x\mid Y=y,Z=z)&=\Prob(X=x\mid Y=y),\\
 \Prob(Z=z\mid Y=y,X=x)&=\Prob(Z=z\mid Y=y)
 \end{aligned}
\]
for all $y\in\rng(Y)$, $x\in\rng(X\mid Y=y)$ and
$z\in\rng(Z\mid Y=y)$. We use the following standard entropy
properties.

\begin{lem}\label{lem:entropy-facts}
The following hold for discrete random variables with finite range.
\begin{enumerate}
\item \emph{(Maximality of the Uniform Distribution)}
For every discrete random variable $X$,
\[
 \ent(X)\le\log_2|\rng(X)|,
\]
with equality if and only if $X$ is uniformly distributed on
$\rng(X)$.
\item \emph{(Chain Rule)}
For any discrete random variables $X_1,\ldots,X_r$,
\[
 \ent(X_1,\ldots,X_r)
 =\ent(X_1)+\sum_{i=2}^r\ent(X_i\mid X_1,\ldots,X_{i-1}).
\]
\item \emph{(Deconditioning Lemma)}
For any discrete random variables $X,Y,Z$,
\[
 \ent(X\mid Y,Z)\le\ent(X\mid Y),
\]
with equality if and only if $X$ and $Z$ are conditionally independent
given $Y$.
\end{enumerate}
\end{lem}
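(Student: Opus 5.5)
These three facts are standard; I would prove them from Jensen's inequality for the strictly concave function $\log_2$, together with the definitions of conditional entropy. The only real content is Gibbs' inequality (non-negativity of relative entropy) and its equality case. Items (1) and (3) are both instances of it, and (2) is pure bookkeeping.

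For (1), write $p(x)=\Prob(X=x)$ for $x\in\rng(X)$. Then
\[
 \ent(X)=\sum_{x\in\rng(X)}p(x)\log_2\frac{1}{p(x)}\le\log_2\sum_{x\in\rng(X)}p(x)\cdot\frac1{p(x)}=\log_2|\rng(X)|
\]
by Jensen's inequality. Strict concavity of $\log_2$ gives equality if and only if $1/p(x)$ is constant on $\rng(X)$, which means $X$ is uniform on its range.

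For (2), I would first prove the two-variable identity $\ent(X,Y)=\ent(Y)+\ent(X\mid Y)$. Write $\Prob(X=x,Y=y)=\Prob(Y=y)\Prob(X=x\mid Y=y)$, take $-\log_2$, and average over the joint distribution, noting that $(x,y)$ lies in the joint range exactly when $y\in\rng(Y)$ and $x\in\rng(X\mid Y=y)$. This gives
\[
 \ent(X,Y)=\ent(Y)+\sum_{y}\Prob(Y=y)\ent(X\mid Y=y)=\ent(Y)+\ent(X\mid Y).
\]
The general chain rule then follows by induction on $r$, applying the two-variable identity with $Y=(X_1,\ldots,X_{r-1})$ and $X=X_r$.

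For (3), fix $y\in\rng(Y)$ and work conditionally on $Y=y$. Put $p(x,z)=\Prob(X=x,Z=z\mid Y=y)$, with marginals $p(x)$ and $p(z)$. Unpacking the definitions,
\[
 \ent(X\mid Y=y)-\sum_z p(z)\,\ent(X\mid Y=y,Z=z)=\sum_{x,z}p(x,z)\log_2\frac{p(x,z)}{p(x)p(z)}.
\]
Jensen's inequality applied to $\log_2$ of $p(x)p(z)/p(x,z)$ shows that this sum is at least $-\log_2\sum_{x,z}p(x)p(z)\le0$, so the difference is non-negative. Averaging over $y$ gives $\ent(X\mid Y,Z)\le\ent(X\mid Y)$.

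The equality case is the step that needs the most care with ranges. Equality forces equality in Jensen for every $y$, so $p(x,z)/(p(x)p(z))$ is constant on the joint support. It also forces the product support to have total mass $1$, which is what rules out $X$ and $Z$ being dependent through their supports. Together these give $p(x,z)=p(x)p(z)$. That product form is exactly the stated two-sided definition of conditional independence, checked over $x\in\rng(X\mid Y=y)$ and $z\in\rng(Z\mid Y=y)$. Conversely, if conditional independence holds, each summand vanishes, so equality holds.
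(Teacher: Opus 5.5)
Your proof is correct. The paper gives no proof of this lemma; it cites these as standard facts from \cite[Section~15]{AS16}, and your Jensen-based derivation is the usual textbook argument.

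There is one small slip in (3). The bound should read $\sum_{x,z}p(x,z)\log_2\frac{p(x,z)}{p(x)p(z)}\ge-\log_2\sum_{(x,z):\,p(x,z)>0}p(x)p(z)\ge0$. That is, the inner sum runs only over the joint support (over all pairs it is identically $1$), and the final inequality is $\ge0$, not $\le0$. This corrected form is the one your equality-case argument about ``total mass $1$'' actually uses.
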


The Chain Rule and the Deconditioning Lemma imply
\begin{equation}\label{eq:conditional-subadditivity}
 \ent(Z_1,\ldots,Z_k\mid X)
 \le\sum_{i=1}^k\ent(Z_i\mid X),
\end{equation}
with equality when $Z_1,\ldots,Z_k$ are conditionally independent
given $X$.

A random homomorphism $\Phi\colon H\to G$ and the tuple
$(\Phi(v))_{v\in V(H)}$ encode the same information and therefore
have the same entropy. Since $\rng(\Phi)\subseteq\Hom(H,G)$,
Maximality of the Uniform Distribution gives
\begin{equation}\label{eq:entropy-counting}
 \ent(\Phi)\le\log_2|\rng(\Phi)|\le\log_2\hom(H,G).
\end{equation}
Equality in the bound $\ent(\Phi)\le\log_2\hom(H,G)$ holds if and
only if $\Phi$ is uniformly distributed on $\Hom(H,G)$. Thus a lower
bound on the entropy of a random homomorphism gives a lower bound
on the number of homomorphisms.

We also need to construct new tuples of random variables from given
ones while preserving their joint distributions on common
coordinates. The following standard lemma allows us to do this.

\begin{lem}[Distribution Gluing Lemma;
 {see, e.g., \cite[Lemma~2.5]{Lee21} and~\cite[Lemma~3.12]{CCN}}]\label{lem:distribution-gluing}
Let $(A_0,B_1,B_2,C)$ be a tuple of discrete random variables such that
$B_1$ and $B_2$ are identically distributed. Then there exists a
random variable $C'$ such that $(B_1,C')$ and $(B_2,C)$ are
identically distributed and $A_0$ and $C'$ are conditionally
independent given $B_1$. Moreover,
\begin{equation}\label{eq:gluing-entropy}
 \ent(A_0,B_1,C')=\ent(A_0,B_1)+\ent(B_2,C)-\ent(B_2).
\end{equation}
\end{lem}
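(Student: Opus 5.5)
We construct $C'$ explicitly from the conditional law of $C$ given $B_2$, "glued" onto $(A_0,B_1)$ along the common coordinate. Because all variables are discrete with finite range, we may work with probability mass functions directly. Write $p$ for the joint law of $(A_0,B_1)$ and $q$ for the joint law of $(B_2,C)$. For $b\in\rng(B_1)=\rng(B_2)$ (these agree since $B_1$ and $B_2$ are identically distributed), let $q(c\mid b)=\Prob(C=c\mid B_2=b)$. We then define a new probability mass function on triples by
\[
 r(a,b,c)=\Prob(A_0=a,B_1=b)\,q(c\mid b),
\]
and realise $(A_0,B_1,C')$ with this joint law on an extended probability space. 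To do this, enlarge the space by a product with an auxiliary space and, conditional on $B_1=b$, sample $C'$ from $q(\cdot\mid b)$ independently of everything else. This keeps $(A_0,B_1)$ and $(B_2,C)$ unchanged as random variables, which is the intended meaning of ``there exists a random variable $C'$''.

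Next we verify the two distributional claims. Summing $r$ over $a$ gives
\[
 \Prob(B_1=b)\,q(c\mid b)=\Prob(B_2=b)\,q(c\mid b)=\Prob(B_2=b,C=c),
\]
so $(B_1,C')$ and $(B_2,C)$ are identically distributed. For conditional independence, note that conditional on $B_1=b$ the law $r(a,b,c)/\Prob(B_1=b)$ factors as $\Prob(A_0=a\mid B_1=b)\,q(c\mid b)$. This yields both equalities in the paper's definition of conditional independence on the relevant ranges.

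Finally we obtain the entropy identity~\eqref{eq:gluing-entropy} from the Chain Rule (Lemma~\ref{lem:entropy-facts}). First,
\[
 \ent(A_0,B_1,C')=\ent(A_0,B_1)+\ent(C'\mid A_0,B_1).
\]
By the equality case of the Deconditioning Lemma, conditional independence gives $\ent(C'\mid A_0,B_1)=\ent(C'\mid B_1)$. Applying the Chain Rule again,
\[
 \ent(C'\mid B_1)=\ent(B_1,C')-\ent(B_1).
\]
Since $(B_1,C')$ has the same law as $(B_2,C)$, and hence $B_1$ has the same law as $B_2$, the right-hand side equals $\ent(B_2,C)-\ent(B_2)$.

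None of the steps is deep. The only point needing care is the construction of $C'$ on a common probability space. If one prefers to avoid enlarging the space, the lemma can instead be read as the existence of a triple $(A_0',B_1',C')$ with the prescribed law, where $(A_0',B_1')$ has the same law as $(A_0,B_1)$; entropies depend only on laws, so this reading suffices for all applications.
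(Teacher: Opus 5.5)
Your proposal is correct and follows essentially the same route as the paper: you sample $C'$ from the conditional law of $C$ given $B_2=b$ on top of $(A_0,B_1)$, check the $(B_1,C')$ marginal and the conditional independence, and then conclude with the Chain Rule. The only difference is that you make two things explicit that the paper leaves implicit, namely the step $\ent(C'\mid A_0,B_1)=\ent(C'\mid B_1)$ via the equality case of the Deconditioning Lemma, and the realisation of $C'$ on a common probability space.
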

\begin{proof}
First sample $(A_0,B_1)$ according to its joint distribution. Given
$A_0=a$ and $B_1=b$, sample $C'$ according to the distribution of $C$
conditioned on $B_2=b$. In other words,
\[
 \Prob(C'=c\mid A_0=a,B_1=b)=\Prob(C=c\mid B_2=b).
\]
This is well-defined for every $b\in\rng(B_1)$ because $B_1$ and
$B_2$ are identically distributed. For all such $b$ and all $c$,
\[
 \Prob(B_1=b,C'=c)
 =\Prob(B_1=b)\Prob(C=c\mid B_2=b)
 =\Prob(B_2=b,C=c).
\]
Hence $(B_1,C')$ and $(B_2,C)$ are identically distributed. The
sampling rule also makes $A_0$ and $C'$ conditionally independent
given $B_1$. By the Chain Rule,
\[
 \ent(A_0,B_1,C')=\ent(A_0,B_1)+\ent(C'\mid B_1)
 =\ent(A_0,B_1)+\ent(C\mid B_2),
\]
which gives \eqref{eq:gluing-entropy}.
\end{proof}

Repeated applications of the Distribution Gluing Lemma give the
following construction for forests. For a tree, it is a special
case of the Markov-tree construction in~\cite[Theorem~2.6]{Lee21} and~\cite[Lemma~3.5]{GLLV}.

\begin{lem}[Gluing along a forest]\label{lem:forest-sampling}
Let $(X,Y,A)$ be a tuple of discrete random variables whose joint
distribution is invariant under interchanging $X$ and $Y$. For every
forest $F$ on $n$ vertices with $q\ge1$ components, there is a random
tuple $\boldsymbol{U}=(A,(X_v)_{v\in V(F)})$ such that $(X_v,A)$ has
the distribution of $(X,A)$ for every vertex $v$, and $(X_u,X_v,A)$
has the distribution of $(X,Y,A)$ for every oriented edge $uv$.
Moreover,
\begin{equation}\label{eq:forest-entropy}
 \ent(\boldsymbol{U})
 =(n-q)\ent(X,Y,A)-(n-2q)\ent(X,A)-(q-1)\ent(A).
\end{equation}
\end{lem}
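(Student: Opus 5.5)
We build the tuple component by component, and within each tree by repeated applications of the Distribution Gluing Lemma (Lemma~\ref{lem:distribution-gluing}). All gluing is done over the shared variable $A$. Since this is a construction plus an entropy count, we proceed by induction on the number of vertices, tracking the entropy through \eqref{eq:gluing-entropy}.

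First consider a single tree $T$ on $m$ vertices. Fix a leaf ordering $v_1,\dots,v_m$ such that each $v_i$ with $i\ge2$ has exactly one neighbour $p(v_i)$ among $v_1,\dots,v_{i-1}$.
\begin{enumerate}
\item Start with $(A,X_{v_1})$ distributed as $(A,X)$, which has entropy $\ent(X,A)$.
\item Given $\boldsymbol U_{i-1}=(A,X_{v_1},\dots,X_{v_{i-1}})$, apply the gluing lemma with $A_0=(X_{v_j})_{j<i,\,v_j\ne p(v_i)}$, $B_1=(X_{p(v_i)},A)$, $B_2=(X,A)$ and $C=Y$. The hypothesis that $B_1$ and $B_2$ are identically distributed holds by induction.
\item We obtain $X_{v_i}:=C'$ with $(X_{p(v_i)},X_{v_i},A)\sim(X,Y,A)$. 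This also gives $(X_{v_i},A)\sim(Y,A)\sim(X,A)$, by the symmetry of $(X,Y,A)$ under swapping $X$ and $Y$.
\item Pairs and triples already constructed keep their distributions, since the gluing only adds a coordinate.
\item The entropy increases by $\ent(X,Y,A)-\ent(X,A)$ at each step.
\end{enumerate}
Because of the symmetry, the resulting distribution on an oriented edge $uv$ is $(X,Y,A)$ whichever orientation was used in the construction. After $m-1$ steps, the entropy of the tree tuple is
\[
(m-1)\ent(X,Y,A)-(m-2)\ent(X,A).
\]

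Now handle the $q$ components $T_1,\dots,T_q$, with $m_1+\dots+m_q=n$. Build the tuple for $T_1$ first. For each further component $T_j$, build its own tuple $(A,(X'_v)_{v\in T_j})$ separately. Then glue it onto the existing tuple with $B_1=A$ and $B_2=A$ (the copy of $A$ in the new tuple), letting $C$ be the new $X'$ variables. This keeps all vertex and edge distributions within $T_j$, because $(B_1,C')\sim(B_2,C)$. By \eqref{eq:gluing-entropy}, each such gluing adds the tree entropy minus $\ent(A)$. Summing over the components gives
\[
\sum_j\bigl[(m_j-1)\ent(X,Y,A)-(m_j-2)\ent(X,A)\bigr]-(q-1)\ent(A)
=(n-q)\ent(X,Y,A)-(n-2q)\ent(X,A)-(q-1)\ent(A),
\]
which is \eqref{eq:forest-entropy}.

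The only delicate point is bookkeeping. Each application of the gluing lemma must match the required identical distribution of $B_1$ and $B_2$, and the symmetry hypothesis has to be invoked so that every vertex marginal and every edge orientation comes out right. Single-vertex components ($m_j=1$) are covered by the same formula: the tree tuple is just $(A,X_v)$, with entropy $\ent(X,A)$.
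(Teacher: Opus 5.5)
Your proof is correct and follows essentially the paper's argument. Both use repeated applications of the Distribution Gluing Lemma along a parent-before-child ordering with $B_1=(X_{p(v)},A)$, $B_2=(X,A)$, $C=Y$, and invoke the $X\leftrightarrow Y$ symmetry for the vertex marginals and both edge orientations; the only difference is organizational: the paper places all $q$ roots first and glues each root over $A$ alone with $C=X$ (increment $\ent(X\mid A)$), whereas you build each tree tuple separately and then glue whole tuples over $A$, which gives the same entropy count.
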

\begin{proof}
Choose a root in each component and relabel the vertices so that
$v_1,\ldots,v_q$ are the roots and every parent precedes its
children. Starting with $\boldsymbol{U}_0=(A)$, we construct
\[
 \boldsymbol{U}_i=(A,X_{v_1},\ldots,X_{v_i})
 \qquad (1\le i\le n)
\]
recursively. At step $i$, apply Lemma~\ref{lem:distribution-gluing}
with $\boldsymbol{U}_{i-1}$ as the variable denoted by $A_0$ in that lemma.

For $i\le q$, take $B_1=B_2=A$ and $C=X$. The lemma supplies a
variable $X_{v_i}$ such that $(X_{v_i},A)$ and $(X,A)$ are
identically distributed, with entropy increment
\[
 \ent(\boldsymbol{U}_i)-\ent(\boldsymbol{U}_{i-1})
 =\ent(X,A)-\ent(A)=\ent(X\mid A).
\]
For $i>q$, let $v_j$ be the parent of $v_i$, where $j<i$.
Inductively, $(X_{v_j},A)$ and $(X,A)$ are identically distributed.
Apply the same lemma with $B_1=(X_{v_j},A)$, $B_2=(X,A)$ and $C=Y$
to append $X_{v_i}$ to $\boldsymbol{U}_{i-1}$. Then
$(X_{v_j},X_{v_i},A)$ and $(X,Y,A)$ are identically distributed, and
\[
 \ent(\boldsymbol{U}_i)-\ent(\boldsymbol{U}_{i-1})
 =\ent(X,Y,A)-\ent(X,A)=\ent(Y\mid X,A).
\]
By symmetry, $(X_{v_i},A)$ also has the distribution of $(X,A)$,
which continues the induction. Each step preserves the joint
distribution of the variables already constructed, so
$\boldsymbol{U}=\boldsymbol{U}_n$ has all the required marginals;
symmetry gives both orientations of every edge.

There are $q$ root steps and $n-q$ nonroot steps. Summing their
entropy increments gives
\begin{align*}
 \ent(\boldsymbol{U})
 &=\ent(A)+q\ent(X\mid A)+(n-q)\ent(Y\mid X,A)\\
 &=(n-q)\ent(X,Y,A)-(n-2q)\ent(X,A)-(q-1)\ent(A),
\end{align*}
as required.
\end{proof}
\section{Proofs of the main results}\label{sec:proofs}

\subsection{Proof ideas}
We first recall the inequalities used in the proof by Kr{\'a}l', Krnc and Lamaison~\cite{KKL26} that $T^{+k}$ is common for $1\le k \le 5$. Then we give an entropy interpretation of their argument for trees and introduce our new ideas. 

Let $T$ be a tree on $n\ge 2$ vertices, and write
$B_k=K_{1,1,k}$ and $S_k=K_{1,k}$.
Kr{\'a}l', Krnc and Lamaison~\cite{KKL26} apply Sidorenko's
inequality inside monochromatic common neighbourhoods and then
H\"older's inequality to obtain
\[
m(T^{+k},W)\ge
\frac{m(B_k,W)^{n-1}}{m(S_k,W)^{n-2}}.
\]
For trees, this estimate has the following entropy interpretation.
In a finite graph $G$ containing a triangle, choose
$(\widehat X,\widehat Y,\widehat A)$ uniformly from
$\Hom(B_k,G)$, where $\widehat X,\widehat Y$ represent the spine
and $\widehat A$ is the ordered tuple of apex images.
Root $T$ at $r$ and first sample
$(\widehat X_r,\widehat A)$ according to the induced star
distribution. For each parent $u$ and child $v$, sample
$\widehat X_v$ according to the conditional distribution of
$\widehat Y$ given
$(\widehat X,\widehat A)=(\widehat X_u,\widehat A)$,
independently of the previously sampled variables conditional
on this pair. The apex tuple is shared throughout the construction.
The resulting random homomorphism $\widehat\Phi$ satisfies
\[
\ent(\widehat\Phi)
=(n-1)\ent(\widehat X,\widehat Y,\widehat A)
 -(n-2)\ent(\widehat X,\widehat A).
\]
The book entropy equals $\log_2\hom(B_k,G)$, while Maximality
of the Uniform Distribution gives
$\ent(\widehat X,\widehat A)\le\log_2\hom(S_k,G)$.
These bounds yield the finite-graph counterpart of the estimate above.

To deduce commonality, Kr{\'a}l', Krnc and Lamaison combine
this estimate with commonality of $B_k$ and the inequality
\[
\frac{m(B_k,W)}{m(S_k,W)}\ge 2^{-(k+1)}.
\]
They prove this ratio bound for $k\le5$, but it fails for $k=12$; see \cite[Section~4]{KKL26}. Thus this ratio estimate cannot extend their argument to all $k$. 
In the entropy interpretation, the gluing construction retains the induced star marginal, but
bounding its entropy by $\log_2\hom(S_k,G)$ uses only its support and discards its actual probabilities.

We retain the same gluing construction and change the local
book distribution. The key step is choosing $(X,Y,Z)$ uniformly from
$\Hom(K_3,G)$ and, conditional on $(X,Y)$, taking independent
copies $Z_1,\ldots,Z_k$ of $Z$. Put $A=(Z_1,\ldots,Z_k)$.
The resulting book is generally nonuniform, but every page
$(X,Y,Z_i)$ is a uniform triangle. In particular,
$(X,Z_i)$ and $(X,Y)$ are identically distributed.
Consequently, the Chain Rule and the Deconditioning Lemma give
\[
\begin{aligned}
\ent(X,Y,A)
 &=k\ent(X,Y,Z)-(k-1)\ent(X,Y),\\
\ent(X,A)
 &\le k\ent(X,Y)-(k-1)\ent(X).
\end{aligned}
\]
Thus the star entropy is controlled by the same triangle-projected edge distribution that appears in the book entropy. After gluing, the remaining entropy terms can be bounded by edge and two-edge path counts. Combining the two colours, Goodman's inequality provides the triangle-density bound needed to absorb the path factor and recover the commonality threshold.

\subsection{Proofs of Theorem~\ref{thm:common} and Theorem~\ref{thm:forest}}
We first establish a key inequality for apex extensions of
forests. Combining the two colours then gives a lower bound in terms
of $m(P_2,W)$, from which both main theorems follow. Throughout this section, factors with exponent zero are omitted.

\begin{lem}[The key inequality]\label{lem:one-colour}
Let $F$ be a forest on $n$ vertices with $q\ge1$ components, where
$n\ge2q$, and let $k\ge1$. Set
\[
 a=k(n-q),\qquad b=(k-1)(n-2q),\qquad c=n+q(k-3).
\]
For every graphon $W$,
\begin{equation}\label{eq:forest-one-colour}
 t(F^{+k},W)t(P_2,W)^b t(K_2,W)^c\ge t(K_3,W)^a.
\end{equation}
In particular, if $T$ is a tree on $n\ge2$ vertices, then
\begin{equation}\label{eq:one-colour}
 t(T^{+k},W)t(P_2,W)^{(k-1)(n-2)}t(K_2,W)^{k+n-3}
 \ge t(K_3,W)^{k(n-1)}.
\end{equation}
\end{lem}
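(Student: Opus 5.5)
We prove \eqref{eq:forest-one-colour} for finite graphs $G$ on $N$ vertices and then pass to graphons. By \eqref{eq:w-random-convergence}, applied along a $W$-random sequence, the finite-graph inequality transfers to every graphon $W$, since it is non-strict and polynomial in homomorphism densities. If $G$ has no triangle, the right-hand side is zero and there is nothing to prove. Otherwise we follow the construction sketched in the proof ideas. Choose $(X,Y,Z)$ uniformly from $\Hom(K_3,G)$. Conditional on $(X,Y)$, take $Z_1,\ldots,Z_k$ to be independent copies of $Z$, and put $A=(Z_1,\ldots,Z_k)$. Because the uniform triangle distribution is invariant under permuting its vertices, the joint law of $(X,Y,A)$ is invariant under interchanging $X$ and $Y$. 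Moreover, each $(X,Y,Z_i)$ is a uniform triangle, so $(X,Z_i)$ has the law of $(X,Y)$ and $Z_i$ has the law of $X$.

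Next we apply Lemma~\ref{lem:forest-sampling} to $(X,Y,A)$ and $F$ to obtain $\boldsymbol U=(A,(X_v)_{v\in V(F)})$. Sending each $v$ to $X_v$ and the $i$-th apex to $Z_i$ gives a homomorphism $F^{+k}\to G$ almost surely. Indeed, for an edge $uv$, the triple $(X_u,X_v,A)$ has the law of $(X,Y,A)$, so $X_u\sim X_v$. For an isolated vertex or any vertex $v$, $(X_v,Z_i)$ has the law of $(X,Z_i)$, so $X_v\sim Z_i$. Hence, by \eqref{eq:entropy-counting},
\[
 \log_2\hom(F^{+k},G)\ge(n-q)\ent(X,Y,A)-(n-2q)\ent(X,A)-(q-1)\ent(A).
\]
We now bound each term. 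Write $T_3=\log_2\hom(K_3,G)$, $E=\ent(X,Y)$ and $V=\ent(X)$. By the chain rule and conditional independence, $\ent(X,Y,A)=E+k(T_3-E)=kT_3-(k-1)E$. By the chain rule and \eqref{eq:conditional-subadditivity}, $\ent(X,A)\le V+k(E-V)=kE-(k-1)V$; its coefficient $-(n-2q)$ is nonpositive because $n\ge2q$, so this upper bound is usable. By subadditivity, $\ent(A)\le kV$. Collecting terms gives
\[
 \log_2\hom(F^{+k},G)\ge aT_3-\alpha E+bV-(q-1)kV,\qquad \alpha=(n-q)(k-1)+(n-2q)k.
\]

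The main bookkeeping step is to regroup $-\alpha E+bV$ as $-b(2E-V)-(\alpha-2b)E$; a direct check gives $\alpha-2b=c$. The quantity $2E-V$ is the entropy of the path obtained by gluing two copies of $(X,Y)$ along $X$, which is Lemma~\ref{lem:distribution-gluing} applied to the symmetric law of $(X,Y)$. That glued path is a random homomorphism of $P_2$, so $2E-V\le\log_2\hom(P_2,G)$. We also have $E\le\log_2\hom(K_2,G)$, $V\le\log_2N$, and $b,c\ge0$; the latter holds because $c\ge q(k-1)$ when $n\ge2q$. Therefore
\[
 \hom(F^{+k},G)\,\hom(P_2,G)^b\,\hom(K_2,G)^c\,N^{k(q-1)}\ge\hom(K_3,G)^a.
\]
Dividing by the appropriate powers of $N$ converts this to densities. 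This requires the vertex-count identity $n+k+3b+2c+k(q-1)=3a$, which a short computation confirms; the result is \eqref{eq:forest-one-colour}. The tree case \eqref{eq:one-colour} is $q=1$. I expect the only delicate points to be the following. First, the signs of the coefficients must permit each upper bound; this is exactly where $n\ge2q$ enters. Second, the negative $\ent(A)$ term must be absorbed by the crude bound $\log_2N$, and the exponent count then has to close exactly.
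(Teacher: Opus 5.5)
Your proposal is correct and follows the paper's proof essentially step for step. It uses the same uniform-triangle book with conditionally i.i.d.\ apices, the same entropy bounds for the book, the star and the glued $P_2$, the same regrouping into the exponents $a,b,c$ (your check $\alpha-2b=c$ is exactly the paper's collection of terms), and the same exponent identity $n+k+3b+2c+k(q-1)=3a$. The only cosmetic difference is that you bound $\ent(A)\le k\ent(X)\le k\log_2 N$ in two steps, whereas the paper uses $\ent(A)\le k\log_2 N$ directly.
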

\begin{proof}
The exponents satisfy
\[
 a\ge q\ge1,\qquad b\ge0,\qquad
 c=(n-2q)+q(k-1)\ge0.
\]
By \eqref{eq:w-random-convergence}, it suffices to prove
\eqref{eq:forest-one-colour} for finite graphs: after normalization,
we may apply the inequality to a sequence of $W$-random graphs and
pass to the limit, as in \cite[proof of Lemma~3.6]{GLLV}.

Let $G$ be a finite graph with $N\ge1$ vertices. If $G$ contains no
triangle, the right-hand side is zero and there is nothing to prove.
Otherwise, let $(X,Y,Z)$ be a uniformly random element of
$\Hom(K_3,G)$. Its distribution is invariant under permutations of
the coordinates. By the definition of $(X,Y,Z)$ and the Maximality of the Uniform Distribution,
\begin{equation}\label{eq:triangle-edge-entropy}
 \ent(X,Y,Z)=\log_2\hom(K_3,G),
 \qquad \ent(X,Y)\le\log_2\hom(K_2,G).
\end{equation}

Given a realization $(X,Y)=(x,y)$, choose $Z_1,\ldots,Z_k$
independently and uniformly, with replacement, from the common
neighbourhood $N_G(x)\cap N_G(y)$. This set is nonempty whenever $\Prob(X=x,Y=y)>0$. Since $(X,Y,Z)$ is a uniformly random homomorphism from $K_3$ to $G$, the uniform distribution on this common neighbourhood is precisely the conditional distribution
of $Z$ given $(X,Y)=(x,y)$. Then put
$A=(Z_1,\ldots,Z_k)$. Thus $(X,Y,A)$ is a random homomorphism from $K_{1,1,k}$ to $G$, symmetric in $X,Y$. The Chain Rule and conditional independence give
\begin{equation}\label{eq:book-entropy}
 \ent(X,Y,A)
 =\ent(X,Y)+k\ent(Z\mid X,Y)
 =k\ent(X,Y,Z)-(k-1)\ent(X,Y).
\end{equation}
By construction, $(X,Y,Z_i)$ has the same distribution as
$(X,Y,Z)$ for every $i$. Since the uniform distribution on
$\Hom(K_3,G)$ is invariant under interchanging the last two
coordinates, $(X,Z)$ and $(X,Y)$ are identically distributed.
Consequently, $(X,Z_i)$ has the distribution of $(X,Y)$.
The Chain Rule and the Deconditioning Lemma give
\begin{equation}\label{eq:star-entropy}
\begin{aligned}
 \ent(X,A) &\le \ent(X)+\sum_{i=1}^k \ent(Z_i|X)=\ent(X)+k\ent(Y\mid X)\\
 &=k\ent(X,Y)-(k-1)\ent(X).
 \end{aligned}
\end{equation}

To bound the remaining entropy expression, let $Y'$ be a
conditionally independent copy of $Y$ given $X$. The tuple
$(Y,X,Y')$ is a random homomorphism from $P_2$ to $G$, so
\begin{equation}\label{eq:path-entropy}
 2\ent(X,Y)-\ent(X)
 =\ent(X)+2\ent(Y\mid X)
 =\ent(Y,X,Y')\le\log_2\hom(P_2,G).
\end{equation}

Apply Lemma~\ref{lem:forest-sampling} to $(X,Y,A)$ and $F$, and let
$\boldsymbol{U}=(A,(X_v)_{v\in V(F)})$ be the resulting tuple.
For each edge $uv$ of $F$, the tuple $(X_u,X_v,A)$ has the book
distribution. For every vertex $v$, including an isolated vertex,
$(X_v,A)$ has the distribution of $(X,A)$; hence $X_v$ is adjacent
to every entry of $A$. Thus $\boldsymbol{U}$ is a random
homomorphism from $F^{+k}$ to $G$.

Using \eqref{eq:forest-entropy}, \eqref{eq:book-entropy} and
\eqref{eq:star-entropy}, with $n-2q\ge0$, gives
\begin{align*}
 &\log_2\hom(F^{+k},G)
 \ge\ent(\boldsymbol{U})\\
 &=(n-q)\ent(X,Y,A)-(n-2q)\ent(X,A)-(q-1)\ent(A)\\
 &\ge a\ent(X,Y,Z)-c\ent(X,Y)-b\bigl(2\ent(X,Y)-\ent(X)\bigr)-(q-1)\ent(A)\\
 &\ge a\log_2\hom(K_3,G)-c\log_2\hom(K_2,G)-b\log_2\hom(P_2,G)-k(q-1)\log_2 N.
\end{align*}
The last step follows from \eqref{eq:triangle-edge-entropy},
\eqref{eq:path-entropy} and $\ent(A)\le k\log_2 N$. Exponentiating,
we obtain
\[
 \hom(F^{+k},G)\hom(P_2,G)^b\hom(K_2,G)^c N^{k(q-1)}
 \ge\hom(K_3,G)^a.
\]
Since
\[
 n+k+3b+2c+k(q-1)=3a,
\]
division by $N^{3a}$ proves \eqref{eq:forest-one-colour} for $G$.
Applying this inequality to $W$-random graphs and using
\eqref{eq:w-random-convergence} proves it for $W$. Taking $q=1$ gives
\eqref{eq:one-colour}.
\end{proof}

The following lemma, combining Lemma~\ref{lem:one-colour}, Goodman's formula and H\"older's inequality, gives lower bounds for $m(T^{+k},W)$ and $m(F^{+k},W)$ in terms of $m(P_2,W)$.

\begin{lem}\label{lem:two-colour}
Let $F$ be a forest on $n$ vertices with $q\ge1$ components, where
$n\ge2q$. For every positive integer $k$ and every graphon $W$,
\begin{equation}\label{eq:forest-degree}
 m(F^{+k},W)\ge2^{1-q}m(P_2,W)^{e(F^{+k})-q}.
\end{equation}
Moreover, for every tree $T$, every positive integer $k$ and every
graphon $W$,
\begin{equation}\label{eq:tree-degree}
 m(T^{+k},W)\ge m(P_2,W)^{e(T^{+k})-1}.
\end{equation}
\end{lem}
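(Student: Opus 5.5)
Apply Lemma~\ref{lem:one-colour} to both $W$ and $1-W$, rearrange each to isolate $t(F^{+k},\cdot)$, add the two bounds, and use H\"older's inequality together with Lemma~\ref{lem:goodman}. Write $W_1=W$, $W_2=1-W$, and set $s=m(P_2,W)$, $p_j=t(P_2,W_j)$, $e_j=t(K_2,W_j)$, $\tau_j=t(K_3,W_j)$. Lemma~\ref{lem:one-colour} gives
\[
 t(F^{+k},W_j)\ge \frac{\tau_j^{a}}{p_j^{b}e_j^{c}}
\]
whenever the denominator is positive. If it vanishes, then $\tau_j=0$ and we keep only the trivial bound $0$. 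The degenerate case where one colour has edge density $0$ is handled separately: then the other colour is the full graphon and $m(F^{+k},W)=1$, which exceeds the right-hand sides since $s\le1$.

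Next apply the generalized H\"older (power mean) inequality to the sum over $j=1,2$:
\[
 \sum_j \frac{\tau_j^{a}}{p_j^{b}e_j^{c}}\ge \frac{\left(\sum_j\tau_j\right)^{a}}{\left(\sum_j p_j\right)^{b}\left(\sum_j e_j\right)^{c}\, 2^{a-b-c-1}}.
\]
This is the standard form $\sum x_j^{a}/(y_j^{b}z_j^{c})\ge (\sum x_j)^a/((\sum y_j)^b(\sum z_j)^c)\cdot 2^{1+b+c-a}$, valid for $a\ge b+c$ with nonnegative exponents. It follows from H\"older's inequality with exponents $a/b$, $a/c$ and $a/(a-b-c)$ applied to $x_j=(x_j^a/(y_j^bz_j^c))^{1/a}y_j^{b/a}z_j^{c/a}\cdot 1$. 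Now substitute the known sums: $\sum_j e_j=1$, $\sum_j p_j=s$, and $\sum_j\tau_j=m(K_3,W)\ge s^2$ by Lemma~\ref{lem:goodman}. This gives
\[
 m(F^{+k},W)\ge 2^{1+b+c-a}s^{2a-b}.
\]

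It remains to check the exponents. We need $a\ge b+c$:
\[
 a-b-c=k(n-q)-(k-1)(n-2q)-n-q(k-3)=q-\bigl(\ldots\bigr).
\]
Computing, $kn-kq-kn+2kq+n-2q-n-qk+3q=q$, so $a-b-c=q\ge1$. Hence $2^{1+b+c-a}=2^{1-q}$. Moreover,
\[
 2a-b=2kn-2kq-(k-1)(n-2q)=kn+n-2q=e(F^{+k})-q+\ldots,
\]
and since $e(F^{+k})=(n-q)+kn$, we get $2a-b=kn+n-2q=e(F^{+k})-q$. This yields \eqref{eq:forest-degree}.

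For the tree case, $q=1$ gives the factor $2^{0}=1$, which is exactly \eqref{eq:tree-degree}. The main obstacle is getting the H\"older step right with correct normalization and nonnegative exponents, including the edge cases $b=0$ or $c=0$, where the corresponding factors are simply dropped. Everything else is bookkeeping.
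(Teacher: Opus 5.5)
You handle forests with $n\ge 2q$ essentially as the paper does, but your tree case has a gap. You get \eqref{eq:tree-degree} by setting $q=1$ in \eqref{eq:forest-degree}, and that requires $n\ge 2q=2$. So the one-vertex tree $T=K_1$ is not covered. Lemma~\ref{lem:one-colour} also cannot be used for it: its exponents would be $a=0$ and $b=-(k-1)$. Here $T^{+k}=K_{1,k}$ with $e(T^{+k})=k$, so you still have to prove $m(K_{1,k},W)\ge m(P_2,W)^{k-1}$.

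For $k=1,2$ this is trivial. For $k\ge3$, put $d(x)=\int_0^1W(x,y)\,dy$ and view $d$ and $1-d$ as one nonnegative function $f$ on two copies of $[0,1]$. Then $\int f=1$, $\int f^2=m(P_2,W)$ and $\int f^k=m(K_{1,k},W)$. Log-convexity of moments (H\"older with $2=\frac{k-2}{k-1}\cdot 1+\frac{1}{k-1}\cdot k$) gives $m(P_2,W)\le m(K_{1,k},W)^{1/(k-1)}$. The paper closes this case the same way.

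There is also a smaller error in your H\"older step, though it does not affect the result. The inequality you quote holds for $a\ge b+c+1$, not for $a\ge b+c$. It fails, for example, at $a=b=1$, $c=0$ with $x=(1,0)$ and $y=(1,\varepsilon)$. The exponents must be $a$, $a/b$, $a/c$, $a/(a-b-c-1)$. The reciprocals of the three you list already sum to $1$, which leaves no room for the first factor $\bigl(x_j^a/(y_j^bz_j^c)\bigr)^{1/a}$. Since $a-b-c=q\ge1$, your bound with the factor $2^{1-q}$ is still correct.

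The paper does this computation slightly differently. It writes $t(K_3,W_i)\le\bigl(t(F^{+k},W_i)t(P_2,W_i)^bt(K_2,W_i)^c\bigr)^{1/a}$, so no division and no degenerate case arise. It then applies three-factor H\"older with weights $q/a,b/a,c/a$ to $t(F^{+k},W_i)^{1/q}$ and gets $2^{(q-1)/a}$ from concavity of $x\mapsto x^{1/q}$. That concavity step is equivalent to your constant fourth factor.
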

\begin{proof}
Let $W_0=W$ and $W_1=1-W$. Let $a,b,c$ be as in Lemma~\ref{lem:one-colour}. Since $a\geq q \geq 1$, $b,c\geq 0$ and $q+b+c=a$, the weighted form of H\"older's inequality gives
\[
 \sum_{i=0}^1 x_i^{q/a}y_i^{b/a}z_i^{c/a}
 \le
 \left(\sum_{i=0}^1x_i\right)^{q/a}
 \left(\sum_{i=0}^1y_i\right)^{b/a}
 \left(\sum_{i=0}^1z_i\right)^{c/a}
\]
for nonnegative $x_i,y_i,z_i$.
Applying this with
\[
 x_i=t(F^{+k},W_i)^{1/q},\qquad
 y_i=t(P_2,W_i),\qquad
 z_i=t(K_2,W_i)
\]
and using Lemma~\ref{lem:one-colour}, we obtain
\begin{align*}
 m(K_3,W)
 &\le\sum_{i=0}^1 t(F^{+k},W_i)^{1/a}
          t(P_2,W_i)^{b/a}t(K_2,W_i)^{c/a}\\
 &\le\left(\sum_{i=0}^1t(F^{+k},W_i)^{1/q}\right)^{q/a}
          m(P_2,W)^{b/a}m(K_2,W)^{c/a}\\
 &\le2^{(q-1)/a}m(F^{+k},W)^{1/a}m(P_2,W)^{b/a}.
\end{align*}
The last step uses concavity of $x^{1/q}$
and $m(K_2,W)=1$. Write $s=m(P_2,W)$. By
Lemma~\ref{lem:goodman}, $s\ge1/2$ and $m(K_3,W)\ge s^2$, so
\begin{equation}\label{eq:forest-two-colour}
 m(F^{+k},W)\ge2^{1-q}\frac{m(K_3,W)^a}{s^b}
 \ge2^{1-q}s^{2a-b}.
\end{equation}
Since $e(F^{+k})=(k+1)n-q$ and
$2a-b=(k+1)n-2q=e(F^{+k})-q$, this proves
\eqref{eq:forest-degree}.

If $T$ has at least two vertices, \eqref{eq:tree-degree} follows by
setting $q=1$. It remains to consider the one-vertex tree, for which
$T^{+k}=K_{1,k}$. The cases $k=1,2$ are immediate.
For $k\ge3$, write $d(x)=\int_0^1W(x,y)\,dy$.
Since $\int_0^1[d(x)+(1-d(x))]\,dx=1$, H\"older's inequality gives
\begin{align*}
 m(K_{1,k},W)
 &=\int_0^1\bigl(d(x)^k+(1-d(x))^k\bigr)\,dx\\
 &\ge\left(\int_0^1\bigl(d(x)^2+(1-d(x))^2\bigr)\,dx\right)^{k-1}\\
 &=m(K_{1,2},W)^{k-1}.
\end{align*}
This proves \eqref{eq:tree-degree} for the one-vertex tree.
\end{proof}

\begin{proof}[Proof of Theorems~\ref{main} and~\ref{thm:forest}]
For every graphon $W$, Lemmas~\ref{lem:goodman} and~\ref{lem:two-colour}
give, for a tree $T$,
\[
 m(T^{+k},W)\ge m(P_2,W)^{e(T^{+k})-1}
 \ge2^{1-e(T^{+k})}.
\]
For a forest $F$ satisfying the hypotheses of Theorem~\ref{thm:forest},
the same lemmas give
\[
 m(F^{+k},W)\ge2^{1-q}m(P_2,W)^{e(F^{+k})-q}
 \ge2^{1-q}2^{-(e(F^{+k})-q)}
 =2^{1-e(F^{+k})}.
\]
The exponents are nonnegative under the respective hypotheses.
Hence $T^{+k}$ and $F^{+k}$ are common for every integer $k \geq 1$.
\end{proof}

\begin{rem}\label{rem:forest-restriction}

The condition $n\ge2q$ in Theorem~\ref{thm:forest} is equivalent to
average degree at least one. In the proof, it ensures that the upper
bound on $\ent(X,A)$ in \eqref{eq:star-entropy} gives a lower bound
on the total entropy in \eqref{eq:forest-entropy}. Some restriction
on the forest is necessary: the one-apex of $K_1\sqcup K_2$ is a
triangle with a pendant edge, which is not common.
\end{rem}

\begin{rem}\label{rem:sidorenko-bases}
Kr{\'a}l', Krnc and Lamaison~\cite{KKL26} in fact prove that
$H^{+k}$ is common for every graph $H$ with the Sidorenko property and $e(H)=v(H)-1$, provided that $1\le k\le5$.
Their result does not require $H$ to be connected, so the base graph may contain cycles. Indeed, they apply Sidorenko's inequality to $H$ inside each monochromatic common neighbourhood of the apex tuple. 
The subsequent H\"older argument depends on $H$ only through
$v(H)$ and $e(H)$; the relation $e(H)=v(H)-1$ therefore gives the same estimate as for trees. In contrast, our forest-gluing
construction relies on acyclicity and does not cover the cyclic
bases allowed by their theorem.
\end{rem}

\section*{AI Declaration}
The entropy methods used in this paper are inspired by Clemen, Noel and the author's earlier works~\cite{CCN} and~\cite{CCN26}, as well as Grzesik, Lee, Lidick{\'y} and Volec's paper~\cite{GLLV}. At first, the author only aimed to prove the commonality of the $k$-apex of trees. The original proof of Theorem~\ref{thm:common} was developed by the author. After proving Theorem~\ref{thm:common}, the author input the proof into ChatGPT-6.1 and asked for further extensions. ChatGPT-6.1 then extended Theorem~\ref{thm:common} to all forests with average degree at least one. The idea and argument of the forest proof constitute a simple generalization of those in the proof of Theorem~\ref{thm:common} (mainly see Lemma~\ref{lem:forest-sampling}); therefore, the author ultimately decided to present them as a unified proof. In addition, the author used AI to review the entire paper, proofread the text for errors and omissions, and polish the language. The author takes full responsibility for the correctness of the paper.

\end{document}